\def\NAT@def@citea{\def\@citea{\NAT@separator}}
\theoremstyle{plain}
\newtheorem{theorem}{Theorem}[section]
\newtheorem{lemma}[theorem]{Lemma}
\newtheorem{proposition}[theorem]{Proposition}
\theoremstyle{definition}
\newtheorem{definition}[theorem]{Definition}
\theoremstyle{remark}
\newtheorem{remark}{Remark}
\begin{document}


\title{Nuclear Norm under Tensor Kronecker Products}

\author{
\name{Robert Cochrane \thanks{CONTACT Robert Cochrane. Email: robcoch@umich.edu}}
\affil{Department of Mathematics, University of Michigan, Ann Arbor, USA}
}

\maketitle

\begin{abstract}
Derksen proved that the spectral norm is multiplicative with respect to vertical tensor products (also known as tensor Kronecker products). We will use this result to show that the nuclear norm and other norms of interest are also multiplicative with respect to vertical tensor products.
\end{abstract}

\begin{keywords}
Tensors; Nuclear Norm; Spectral Norm; Kronecker Product
\end{keywords}

\section{Introduction}

Much work in algebraic complexity theory concerns the \emph{vertical tensor product}, alternatively known as the \emph{tensor Kronecker product}. It is natural to ask which properties of tensors are preserved under this product. It has been known since the work of Strassen that properties such as the tensor rank and border rank are not preserved when taking the vertical tensor product (or even the usual tensor product) of tensors of order at least 3 (\cite{C}, \cite{C2}). Meanwhile, it is clear that the  Frobenius norm of tensors is preserved, and it has been shown that the spectral norm of tensors is also multiplicative under the vertical tensor product (\cite{D}), over both $\mathbb{R}$ and $\mathbb{C}$.

We use this information to show that some other norms on tensor product spaces are also multiplicative with respect to the vertical tensor product - in particular, the nuclear norm has this property. This serves as motivation to investigate how other classical questions involving tensor rank might be answered when rank is replaced by the nuclear norm. We discuss how an analogue of Strassen's direct sum conjecture holds for the nuclear norm.

\section{Notation}

We will follow the notation of Derksen (\cite{D}). In particular, a \emph{d-th order tensor product space} is a pair $\textbf{U}=(U,(U^{(1)}, \ldots , U^{(d)}))$ where each $U^{(i)}$ is a finite dimensional vector space and 
\[U=U^{(1)}\otimes \cdots \otimes U^{(d)}.\]
We will assume all of our vector spaces are over $\mathbb{R}$ or $\mathbb{C}$, and the results hold over both fields, unless otherwise stated.

If $\textbf{U}=(U,(U^{(1)}, \ldots , U^{(d)}))$ and $\textbf{V}=(V,(V^{(1)}, \ldots , V^{(d)}))$ are tensor product spaces, then we define their \emph{vertical tensor product} as 
\[\textbf{U} \boxtimes \textbf{V}=\left(U \otimes V,(U^{(1)} \otimes V^{(1)}, \ldots , U^{(d)} \otimes V^{(d)})\right).\]
For $S \in \textbf{U}$, $T \in \textbf{V}$, we define $S \boxtimes T$ to be $S \otimes T$, viewed as an element of $\textbf{U} \boxtimes \textbf{V}$.  

For $S=(S_1,\ldots ,S_r) \in \textbf{U}^r$, $T=(T_1, \ldots, T_s) \in \textbf{V}^s$, we define 
\[S \boxtimes T = (S_i \boxtimes T_j | 1 \leq i \leq r, 1 \leq j \leq s).\]

Similarly, if $\textbf{U}=(U,(U^{(1)}, \ldots , U^{(d)}))$ and $\textbf{V}=(V,(V^{(1)}, \ldots , V^{(d)}))$ are tensor product spaces, then we define their \emph{direct sum} as
\[\textbf{U} \oplus \textbf{V}=\left(U \oplus V,(U^{(1)} \oplus V^{(1)}, \ldots , U^{(d)} \oplus V^{(d)})\right).\]
For $S \in \textbf{U}$, $T \in \textbf{V}$, we define $S \oplus T$ as an element of $\textbf{U} \oplus \textbf{V}$ in the obvious way. 

For a tensor product space $\textbf{U}$, and $S, T \in \textbf{U}$, we will denote the Frobenius inner product of $S$ and $T$ by $\langle S,T \rangle$, and the associated Frobenius norm (sometimes also known as the Euclidean norm) of $S$ by 
\[\|S\|:=\sqrt{|\langle S,S \rangle|}.\]

For a tensor $S$ in a tensor product space $\textbf{U}$, we may write $S$ as a sum of pure (or simple) tensors
\begin{equation}\label{decomp}
    S= \sum\limits_{i=1}^r v_i \text{ where } v_i=v_i^{(1)} \otimes \ldots \otimes v_i^{(d)} \text{ and } v_i^{(e)} \in U^{(e)}.\tag{1}
\end{equation}

We define the \emph{nuclear norm} of $S$ to be the infimum of $\sum\limits_{i=1}^r \|v_i\|$ over all such decompositions (\ref{decomp}), and denote it by $\|S\|_{\star}$. It can be shown that this infimum can always be achieved (\cite{FL}), and so we may take the minimum over such decompositions instead. 

The \emph{spectral norm} of $S$, denoted $\| S \|_{\sigma}$, is defined to be the maximum value of $|\langle S,u \rangle|$ where $u$ ranges over all pure tensors of unit length. More generally, as in \cite{D}, for a an $r$-tuple of tensors, $\textbf{S}=(S_1, \ldots S_r)$, and $1 \leq \alpha < \infty$, we will define $[\textbf{S}]_{\alpha}$ as the maximum of
\[\left(\sum_{i=1}^r |\langle S_i, u \rangle |^{\alpha} \right)^{1/\alpha}\]
over all pure tensors $u$ of unit length. 
For $\alpha=\infty$, we will define $[\textbf{S}]_{\alpha}$ as the maximum over all pure tensors $u$ of unit length of $\max\limits_i |\langle S_i,u \rangle |$, or equivalently, as $\max\limits_i \| S_i \|_\sigma$

We say two norms $\|\cdot \|_X$ and $\|\cdot \|_Y$ on a tensor product space $\textbf{U}$ are \emph{dual} if for all $S, S' \in \textbf{U}$, we have $|\langle S,S' \rangle| \leq \|S\|_{X}\| S' \|_Y$, and for every $S \in \textbf{U}$, there exists some nonzero $S' \in \textbf{U}$ such that the above inequality becomes an equality. (The last condition is equivalent to: for every $S'\in \textbf{U}$ there exists a nonzero $S\in \textbf{U}$ for which the inequality becomes an  equality.)

\section{Norms under Tensor Kronecker Products}

In this section, $\textbf{U}=(U,(U^{(1)}, \ldots , U^{(d)}))$ and $\textbf{V}=(V,(V^{(1)}, \ldots , V^{(d)}))$ will denote $d$-th order tensor product spaces (over $\mathbb{R}$ or $\mathbb{C}$). Many of the norms we will be interested in can be defined in similar ways on different tensor spaces, and so in an abuse of notation, we will often use $\|\cdot \|_X$ to simultaneously denote the norms $\|\cdot \|_{X,\textbf{U}}$, $\|\cdot \|_{X,\textbf{V}}$, and $\|\cdot \|_{X,\textbf{U} \boxtimes \textbf{V}}$ defined on the spaces $\textbf{U}$, $\textbf{V}$, and $\textbf{U} \boxtimes \textbf{V}$ respectively.

The following is clear by direct calculation:

\begin{proposition} \label{Easylem} If $S$ and $T$ are tensors, then $\|S \otimes T\|=\|S\|\|T\|$. Moreover, if $S \in \textbf{U}$ and $T \in \textbf{V}$ are $d$-th order tensors, then $\| S \boxtimes T\| =\| S\| \| T \|$.
\end{proposition}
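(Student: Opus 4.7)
The plan is to reduce everything to the bilinear identity $\langle A\otimes B, C\otimes D\rangle=\langle A,C\rangle\,\langle B,D\rangle$ for the Frobenius inner product, and then note that the statement for $\boxtimes$ follows because $\boxtimes$ agrees with $\otimes$ at the level of the underlying vector space (where the Frobenius norm is defined).

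For the first claim, I would pick orthonormal bases of each factor of $\textbf{U}$ and each factor of the tensor product space containing $T$, take the induced orthonormal basis of pure tensors, and verify on basis elements the identity $\langle u\otimes v, u'\otimes v'\rangle=\langle u,u'\rangle\langle v,v'\rangle$; extending by sesquilinearity (or bilinearity in the real case) gives $\langle S\otimes T, S\otimes T\rangle=\langle S,S\rangle\langle T,T\rangle$, and taking square roots yields $\|S\otimes T\|=\|S\|\|T\|$.

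For the second claim, I would observe that by definition the vertical tensor product space $\textbf{U}\boxtimes\textbf{V}$ has underlying vector space $U\otimes V$, and the element $S\boxtimes T$ is defined to be $S\otimes T$ viewed inside $U\otimes V$. Since the Frobenius norm on any tensor product space depends only on the ambient inner-product space structure (it is the norm coming from the Frobenius inner product on the underlying vector space, which is identical whether we regard the space as $(U^{(1)}\otimes\cdots\otimes U^{(d)})\otimes(V^{(1)}\otimes\cdots\otimes V^{(d)})$ or as $(U^{(1)}\otimes V^{(1)})\otimes\cdots\otimes(U^{(d)}\otimes V^{(d)})$, these spaces being canonically isometrically isomorphic via the obvious reordering of simple tensors), we have $\|S\boxtimes T\|=\|S\otimes T\|$, and the result follows from the first part.

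I expect no real obstacle: the only subtle point is the bookkeeping in verifying that the canonical isomorphism between the two groupings of factors is an isometry with respect to the Frobenius inner product, which amounts to checking that it sends the orthonormal basis of simple tensors (built from orthonormal bases of the $U^{(e)}$ and $V^{(e)}$) to an orthonormal basis of simple tensors of $\textbf{U}\boxtimes\textbf{V}$. That verification is essentially tautological, which is why the proposition is labeled as clear by direct calculation.
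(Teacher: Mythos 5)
Your proposal is correct and follows essentially the same route as the paper's (commented-out) proof: a direct computation of $\langle S\otimes T, S\otimes T\rangle=\langle S,S\rangle\langle T,T\rangle$ in coordinates, followed by the observation that $S\boxtimes T$ and $S\otimes T$ live in the same underlying inner-product space up to a reordering of factors that preserves the orthonormal basis of simple tensors, so $\|S\boxtimes T\|=\|S\otimes T\|$. No gaps.
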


We will also make use of the following propositions. The first is well known (e.g. see \cite{LC}). 

\begin{proposition} \label{Derksenlem1} On any tensor product space, $ \|\cdot\|_{\star}$ and $\| \cdot \|_{\sigma}$ are dual.
\end{proposition}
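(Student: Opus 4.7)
The approach is to verify the two conditions in the definition of dual norms separately: first, that $|\langle S, S'\rangle| \le \|S\|_\star \|S'\|_\sigma$ for all $S, S' \in \mathbf{U}$; and second, that for every nonzero $S$ there exists a nonzero $S'$ attaining equality. Since the proposition is asserted to be well-known (and cited to \cite{LC}), the goal of this sketch is just to make the logical skeleton transparent.

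For the inequality I would work directly from the decomposition-based definition. By \cite{FL} choose a minimum decomposition $S = \sum_{i=1}^r v_i$ with each $v_i$ a pure tensor and $\sum_i \|v_i\| = \|S\|_\star$. For each nonzero $v_i$, the normalization $v_i/\|v_i\|$ is a pure tensor of unit Frobenius norm, so by the definition of the spectral norm $|\langle v_i/\|v_i\|,\, S'\rangle| \le \|S'\|_\sigma$. Multiplying by $\|v_i\|$, summing over $i$, and applying bilinearity and the triangle inequality to $\langle S, S'\rangle = \sum_i \langle v_i, S'\rangle$ yields $|\langle S, S'\rangle| \le \|S\|_\star \|S'\|_\sigma$.

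For attainment I would invoke convex duality. Let $P \subset \mathbf{U}$ denote the set of pure tensors of unit Frobenius norm; note that $P$ is compact (being a continuous image of a product of unit spheres in the $U^{(e)}$) and automatically balanced, since a unit scalar may be absorbed into any factor of a pure tensor. Unwinding the infimum in the definition of $\|\cdot\|_\star$ shows that it is the Minkowski gauge of $\operatorname{conv}(P)$, while the unit ball of $\|\cdot\|_\sigma$ is by definition the polar $\operatorname{conv}(P)^\circ$. The bipolar theorem in finite dimensions, applicable because $\operatorname{conv}(P)$ is compact, balanced, and contains $0$, then identifies the gauge of $\operatorname{conv}(P)$ with the support functional of $\operatorname{conv}(P)^\circ$, giving $\|S\|_\star = \sup\{|\langle S, T\rangle| : \|T\|_\sigma \le 1\}$. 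Compactness of the spectral unit ball and continuity of $T \mapsto \langle S, T\rangle$ ensure this supremum is attained at some $S'$ with $\|S'\|_\sigma = 1$, so $|\langle S, S'\rangle| = \|S\|_\star \|S'\|_\sigma$.

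The main obstacle is the reverse inequality $\|S\|_\star \le \sup_{\|T\|_\sigma \le 1}|\langle S, T\rangle|$, since the opposite direction follows immediately from the first part. Making this rigorous is the convex-analytic step — either via the bipolar theorem or an equivalent separating-hyperplane argument — and hinges on the fact that $P$ is compact so that $\operatorname{conv}(P)$ equals its own bipolar. The rest of the argument is bookkeeping: identify $P$ as balanced and $\operatorname{conv}(P)$ as compact, then quote the classical duality to conclude.
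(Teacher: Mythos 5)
Your argument is correct: the Cauchy--Schwarz-type inequality via a minimizing decomposition, plus the identification of $\|\cdot\|_\star$ as the gauge of the convex hull of the unit pure tensors and $\|\cdot\|_\sigma$ as its polar, followed by the bipolar theorem and compactness, is exactly the standard proof. The paper offers no proof of this proposition, deferring to \cite{LC}, and your sketch reproduces that classical argument faithfully, including the one genuinely nontrivial step (that $\operatorname{conv}(P)$, being compact, convex, balanced, and containing $0$, equals its bipolar).
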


The second appears as Proposition 3.3 in \cite{D}. Though the proof there is performed over $\mathbb{C}$, it also works over $\mathbb{R}$.

\begin{proposition} \label{Derksenlem2} If $\textbf{U}=(U,(U^{(1)}, \ldots , U^{(d)}))$ and $\textbf{V}=(V,(V^{(1)}, \ldots , V^{(d)}))$ are tensor product spaces (over $\mathbb{R}$ or $\mathbb{C}$) with $S \in \textbf{U}^r$ and $T \in \textbf{V}^s$, then we have
\[[ S \boxtimes T]_{\alpha} =[ S]_{\alpha} [ T ]_{\alpha}.\]

In particular, taking $r=s=1$, we see that for $S \in \textbf{U}$, and $T \in \textbf{V}$, we have $\| S \boxtimes T\|_{\sigma} =\| S\|_{\sigma} \| T \|_{\sigma}$.
\end{proposition}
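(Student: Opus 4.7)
The plan is to prove the two inequalities separately. The forward direction $[S\boxtimes T]_\alpha\ge [S]_\alpha[T]_\alpha$ is straightforward: take pure unit tensors $u\in\textbf{U}$ and $v\in\textbf{V}$ realizing the suprema defining $[S]_\alpha$ and $[T]_\alpha$. Then $u\boxtimes v$ is a pure unit tensor in $\textbf{U}\boxtimes\textbf{V}$ (by Proposition~\ref{Easylem}), and since $\langle S_i\boxtimes T_j,\,u\boxtimes v\rangle=\langle S_i,u\rangle\,\langle T_j,v\rangle$, the double sum defining $[S\boxtimes T]_\alpha$ evaluated at $u\boxtimes v$ factors as a product, which yields the forward bound after taking $\alpha$-th roots. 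The case $\alpha=\infty$ is analogous, with maxima replacing sums.

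For the reverse inequality, I would analyze an arbitrary pure unit tensor $w=w^{(1)}\otimes\cdots\otimes w^{(d)}\in\textbf{U}\boxtimes\textbf{V}$ (with $w^{(e)}\in U^{(e)}\otimes V^{(e)}$) through the singular value decomposition of each factor: $w^{(e)}=\sum_{k_e}\sigma_{k_e}^{(e)}\,u_{k_e}^{(e)}\otimes v_{k_e}^{(e)}$ with orthonormal systems $\{u_{k_e}^{(e)}\}\subset U^{(e)}$ and $\{v_{k_e}^{(e)}\}\subset V^{(e)}$. Writing $U_\textbf{k}=\bigotimes_e u_{k_e}^{(e)}$, $V_\textbf{k}=\bigotimes_e v_{k_e}^{(e)}$, and $a_\textbf{k}=\prod_e\sigma_{k_e}^{(e)}$ for the multi-index $\textbf{k}=(k_1,\ldots,k_d)$, expansion of the inner product yields
\[
\langle S_i\boxtimes T_j,w\rangle=\sum_\textbf{k} a_\textbf{k}\,\langle S_i,U_\textbf{k}\rangle\,\langle T_j,V_\textbf{k}\rangle,\qquad \sum_\textbf{k} a_\textbf{k}^2=1,
\]
where $\{U_\textbf{k}\}$ and $\{V_\textbf{k}\}$ are orthonormal families of pure unit tensors in $\textbf{U}$ and $\textbf{V}$, respectively.

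The hard part is then to bound $\sum_{i,j}|\,\cdot\,|^\alpha$ using these ingredients. Naive attacks are too lossy: Minkowski's inequality applied in $\textbf{k}$ produces the bound $[S]_\alpha[T]_\alpha\sum_\textbf{k}|a_\textbf{k}|$, and $\sum_\textbf{k}|a_\textbf{k}|$ can be as large as $\sqrt{N}$ even though $\sum a_\textbf{k}^2=1$; entrywise Cauchy-Schwarz is similarly defeated by a factor depending on the SVD ranks of the $w^{(e)}$. My preferred route is to dualize, using Proposition~\ref{Derksenlem1} together with $\ell^\alpha$-$\ell^\beta$ duality ($1/\alpha+1/\beta=1$) to rewrite $[S]_\alpha=\sup_{\|c\|_\beta=1}\|\sum_i c_iS_i\|_\sigma$, and analogously for $T$ and for $S\boxtimes T$. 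Rank-one coefficient arrays $c_{ij}=c'_i d'_j$ are then handled immediately by the $r=s=1$ spectral-norm multiplicativity --- the ``In particular'' clause of the proposition, which follows from the SVD calculation above specialized to a scalar inner product --- since $\|\sum_{i,j}c'_id'_j S_i\boxtimes T_j\|_\sigma=\|\sum c'_iS_i\|_\sigma\|\sum d'_jT_j\|_\sigma\le[S]_\alpha[T]_\alpha$. The main remaining obstacle is to reduce the full $\ell^\beta$-ball supremum to its rank-one slice; I would attempt this by inducting on the tensor order $d$, contracting one mode of $w$ at a time via partial contractions $S\cdot u$ (using the bound $[S\cdot u]_\alpha\le\|u\|\,[S]_\alpha$) and closing the estimate with the convexity of $t\mapsto t^\alpha$ to produce a self-consistent bound that matches $[S]_\alpha[T]_\alpha$.
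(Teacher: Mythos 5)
First, a point of context: the paper does not prove this proposition at all --- it is imported verbatim as Proposition~3.3 of Derksen's paper \cite{D} --- so your argument must stand on its own. Your forward inequality $[S\boxtimes T]_\alpha\geq[S]_\alpha[T]_\alpha$ is correct and complete. The reverse inequality, however, has two genuine gaps. The first is your claim that the $r=s=1$ case $\|S\boxtimes T\|_\sigma\leq\|S\|_\sigma\|T\|_\sigma$ ``follows from the SVD calculation above specialized to a scalar inner product.'' It does not follow from any estimate you have on the table: starting from $\langle S\boxtimes T,w\rangle=\sum_{\mathbf{k}}a_{\mathbf{k}}\langle S,U_{\mathbf{k}}\rangle\langle T,V_{\mathbf{k}}\rangle$ with $\sum_{\mathbf{k}}a_{\mathbf{k}}^2=1$, Cauchy--Schwarz combined with $|\langle S,U_{\mathbf{k}}\rangle|\leq\|S\|_\sigma$ and Bessel's inequality for the orthonormal family $\{V_{\mathbf{k}}\}$ yields only $\|S\|_\sigma\|T\|$ (Frobenius norm of $T$), which is strictly weaker. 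Extracting $\|S\|_\sigma\|T\|_\sigma$ requires exploiting the product structure $a_{\mathbf{k}}=\prod_e\sigma^{(e)}_{k_e}$, and that is precisely where the content of Derksen's theorem lives; so the base case of your intended induction is unproven.

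The second gap is the reduction to rank-one coefficient arrays. Your dual reformulation $[S\boxtimes T]_\alpha=\sup_{\|c\|_\beta\leq 1}\|\sum_{i,j}c_{ij}\,S_i\boxtimes T_j\|_\sigma$ is valid, and rank-one arrays $c_{ij}=c'_id'_j$ do give exactly $[S]_\alpha[T]_\alpha$ granting the base case; but showing that the supremum over the full $\ell^\beta$ ball of $r\times s$ arrays is attained on the rank-one slice is not a simplification --- it is equivalent to the proposition being proved. Your proposed remedy (induct on $d$, contract one mode of $w$ at a time) is the right strategic instinct: the reason the statement is formulated for tuples in the first place is so that a mode-by-mode induction can close on itself, since contracting $w^{(1)}=\sum_k\sigma_ku_k\otimes v_k$ turns $S\boxtimes T$ into a sum over $k$ of vertical products of \emph{larger tuples} of lower-order tensors. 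But as written you give no inductive hypothesis, carry out no estimate, and the crude bound $[S\cdot u]_\alpha\leq\|u\|[S]_\alpha$ loses exactly the factor ($\sum_k\sigma_k$ versus $(\sum_k\sigma_k^2)^{1/2}$) that you identified as fatal in your own discussion of the ``naive attacks.'' As it stands the argument does not establish the reverse inequality.
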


We aim to use the duality of the spectral and nuclear norms, together with the above result on the spectral norm, to make a statement about the nuclear norm of the vertical tensor product of tensors. We require the following Lemma: 

\begin{lemma} \label{Holder} Let $\|\cdot \|_X$ and $\|\cdot \|_Y$ be dual on tensor product spaces $\textbf{U}$, $\textbf{V}$, and $\textbf{U} \boxtimes \textbf{V}$, and suppose that for all $S \in \textbf{U}$ and $T \in \textbf{V}$, we have $\| S \boxtimes T\|_Y \leq \| S\|_Y \| T \|_Y$. Then for all $S \in \textbf{U}$, $T \in \textbf{V}$, we have $\| S \boxtimes T\|_X \geq \| S\|_X \| T \|_X$.  
\end{lemma}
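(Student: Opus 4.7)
The plan is to leverage the duality condition to reduce the supermultiplicativity question for $\|\cdot\|_X$ to the (hypothesized) submultiplicativity of its dual $\|\cdot\|_Y$, using the fact that the Frobenius inner product on $\textbf{U}\boxtimes\textbf{V}$ factors across the $\boxtimes$-product. The central identity I will rely on is
\[\langle S \boxtimes T,\, S' \boxtimes T' \rangle = \langle S, S' \rangle \langle T, T' \rangle,\]
which is the tensor-product nature of the Frobenius inner product and is essentially parallel to Proposition \ref{Easylem}.

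In order, the steps are as follows. First, given nonzero $S \in \textbf{U}$ and $T \in \textbf{V}$, I invoke duality of $\|\cdot\|_X$ and $\|\cdot\|_Y$ on each factor to produce nonzero witnesses $S' \in \textbf{U}$ and $T' \in \textbf{V}$ with $|\langle S, S'\rangle| = \|S\|_X \|S'\|_Y$ and $|\langle T, T'\rangle| = \|T\|_X \|T'\|_Y$. Second, I apply the factoring identity above to obtain
\[|\langle S \boxtimes T,\, S' \boxtimes T'\rangle| = \|S\|_X \|T\|_X \|S'\|_Y \|T'\|_Y.\]
Third, on $\textbf{U} \boxtimes \textbf{V}$ duality gives
\[|\langle S \boxtimes T,\, S' \boxtimes T'\rangle| \leq \|S \boxtimes T\|_X \, \|S' \boxtimes T'\|_Y,\]
while the hypothesized submultiplicativity of $\|\cdot\|_Y$ yields $\|S' \boxtimes T'\|_Y \leq \|S'\|_Y \|T'\|_Y$. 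Chaining these three relations and cancelling the positive scalar $\|S'\|_Y \|T'\|_Y$ produces $\|S\|_X \|T\|_X \leq \|S \boxtimes T\|_X$.

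The edge cases $S=0$ or $T=0$ are immediate since both sides vanish. I do not foresee a serious obstacle: the one point that must be handled carefully is ensuring the witnesses $S', T'$ can be chosen nonzero, so that the final cancellation is legitimate — but this is precisely the content of the definition of duality quoted in the notation section. The real conceptual ingredient is recognizing that the factorization of the Frobenius inner product across $\boxtimes$ is exactly what converts multiplicativity of $\|\cdot\|_Y$ into supermultiplicativity of its dual $\|\cdot\|_X$.
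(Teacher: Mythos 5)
Your proposal is correct and follows essentially the same route as the paper: choose nonzero dual witnesses $S'$ and $T'$ on each factor, use the factorization $\langle S \boxtimes T, S' \boxtimes T'\rangle = \langle S, S'\rangle\langle T, T'\rangle$ together with the duality inequality on $\textbf{U}\boxtimes\textbf{V}$ and the submultiplicativity of $\|\cdot\|_Y$, then cancel the nonzero factor $\|S'\|_Y\|T'\|_Y$. No gaps; your handling of the nonzeroness of the witnesses and the trivial cases $S=0$ or $T=0$ is exactly what is needed.
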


\begin{proof} Pick nonzero $S' \in \textbf{U}$ and $T' \in \textbf{V}$ such that $|\langle S,S' \rangle| = \|S\|_{X}\| S' \|_{Y}$ and $|\langle T,T' \rangle| = \|T\|_{X}\| T' \|_{Y}$. Then,

\begin{align*}
   \|S \boxtimes T\|_{X} \| S' \|_Y \| T' \|_Y  & \geq \|S \boxtimes T\|_{X} \| S' \boxtimes T' \|_Y \\
     & \geq |\langle S \boxtimes T,S' \boxtimes T' \rangle|\\
     & = |\langle S,S' \rangle| |\langle T,T' \rangle|\\
     & = \|S\|_{X} \|T\|_{X} \| S' \|_Y \| T' \|_Y .\\
\end{align*}

Since $S'$ and $T'$ are nonzero, we conclude that $\|S \boxtimes T\|_{X} \geq \|S\|_{X}\|T\|_{X}$.
\end{proof}

Applying the above lemma to Proposition \ref{Derksenlem2} leads to an analogous result for the nuclear norm:

\begin{proposition} If $S \in \textbf{U}$, and $T \in \textbf{V}$ are $d$-th order tensors, then $\|S \boxtimes T\|_{\star}=\|S\|_{\star}\|T\|_{\star}$.
\end{proposition}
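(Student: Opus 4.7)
My plan is to prove the two inequalities separately, combining a direct computation for submultiplicativity with the abstract duality argument of Lemma \ref{Holder} for supermultiplicativity.

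For the upper bound $\|S\boxtimes T\|_\star \le \|S\|_\star\|T\|_\star$, I would take pure-tensor decompositions $S=\sum_{i=1}^r v_i$ and $T=\sum_{j=1}^s w_j$ achieving the nuclear norm (these exist by \cite{FL}). The key observation is that if $v_i = v_i^{(1)}\otimes\cdots\otimes v_i^{(d)}$ and $w_j = w_j^{(1)}\otimes\cdots\otimes w_j^{(d)}$, then
\[ v_i \boxtimes w_j = (v_i^{(1)}\otimes w_j^{(1)})\otimes\cdots\otimes(v_i^{(d)}\otimes w_j^{(d)}) \]
is a pure tensor in $\mathbf{U}\boxtimes\mathbf{V}$, since the $e$-th factor space of $\mathbf{U}\boxtimes\mathbf{V}$ is by definition $U^{(e)}\otimes V^{(e)}$. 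Thus $S\boxtimes T = \sum_{i,j} v_i\boxtimes w_j$ is a valid pure-tensor decomposition. Applying Proposition \ref{Easylem} and then summing gives
\[ \|S\boxtimes T\|_\star \le \sum_{i,j}\|v_i\boxtimes w_j\| = \sum_{i,j}\|v_i\|\|w_j\| = \|S\|_\star\|T\|_\star. \]

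For the lower bound, I would invoke Lemma \ref{Holder} with $\|\cdot\|_X = \|\cdot\|_\star$ and $\|\cdot\|_Y = \|\cdot\|_\sigma$. The duality hypothesis holds on each of $\mathbf{U}$, $\mathbf{V}$, and $\mathbf{U}\boxtimes\mathbf{V}$ by Proposition \ref{Derksenlem1}, and the required submultiplicativity $\|S\boxtimes T\|_\sigma \le \|S\|_\sigma\|T\|_\sigma$ holds (in fact as an equality) by Proposition \ref{Derksenlem2}. The lemma therefore yields $\|S\boxtimes T\|_\star \ge \|S\|_\star\|T\|_\star$, and combining the two inequalities gives the claimed equality.

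There is no genuine obstacle here; the entire machinery has been set up exactly for this application. The only point worth stating carefully is the identification used in the upper bound — that $v\boxtimes w$ is pure in $\mathbf{U}\boxtimes\mathbf{V}$ whenever $v$ and $w$ are pure in $\mathbf{U}$ and $\mathbf{V}$ — but this is immediate from the definition of the vertical tensor product, since $\boxtimes$ groups the $e$-th factors of $v$ and $w$ into a single tensor in $U^{(e)}\otimes V^{(e)}$, preserving the number of factors. In particular, the definition would fail for the ordinary tensor product $v\otimes w$, where the number of factors doubles; this explains why the result is specific to $\boxtimes$.
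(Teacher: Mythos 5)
Your proof is correct and follows essentially the same route as the paper: the upper bound via tensoring pure-tensor decompositions together with Proposition \ref{Easylem}, and the lower bound via Lemma \ref{Holder} combined with Propositions \ref{Derksenlem1} and \ref{Derksenlem2}. The only cosmetic difference is that you fix minimizing decompositions at the outset (citing \cite{FL}) whereas the paper works with arbitrary decompositions and takes the minimum at the end; both are valid.
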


\begin{proof} Combining Propositions \ref{Derksenlem1} and  \ref{Derksenlem2} with Lemma \ref{Holder}, we see that $\|S \boxtimes T\|_{\star} \geq \|S\|_{\star}\|T\|_{\star}$.
So it remains to show that $\|S \boxtimes T\|_{\star} \leq \|S\|_{\star}\|T\|_{\star}$.

Let
\[S=\sum_{i=1}^{r_S} u_i \text{ with } u_i= u_i^{(1)} \otimes \ldots \otimes u_i^{(d)} \text{ and } u_i^{(e)} \in U^{(e)}, \]
and similarly,
\[T=\sum_{j=1}^{r_T} v_j \text{ with } v_j= v_j^{(1)} \otimes \ldots \otimes v_j^{(d)} \text{ and } v_j^{(e)} \in V^{(e)}. \]

Then
\[S \boxtimes T= \sum_{i=1}^{r_S} \sum_{j=1}^{r_T} (u_i^{(1)} \otimes v_j^{(1)}) \otimes \ldots \otimes (u_i^{(d)} \otimes v_j^{(d)})\]

and so, applying Proposition \ref{Easylem}, 
\begin{align*} \|S \boxtimes T\|_{\star} &\leq \sum\limits_{i=1}^{r_S} \sum\limits_{j=1}^{r_T} \left\|(u_i^{(1)} \otimes v_j^{(1)}) \otimes \ldots \otimes (u_i^{(d)} \otimes v_j^{(d)})\right\|\\
& = \sum\limits_{i=1}^{r_S} \sum\limits_{j=1}^{r_T} \left\|u_i^{(1)} \otimes \ldots \otimes u_i^{(d)} \right\| \left\| v_j^{(1)} \otimes \ldots \otimes v_j^{(d)}\right\|\\
& = \sum\limits_{i=1}^{r_S} \left\|u_i^{(1)} \otimes \ldots \otimes u_i^{(d)} \right\| \sum\limits_{j=1}^{r_T} \left\| v_j^{(1)} \otimes \ldots \otimes v_j^{(d)}\right\|.\\
\end{align*}

Taking the minima of $\sum\limits_{i=1}^{r_S} \|u_i\|$ and $\sum\limits_{j=1}^{r_S} \|v_j\|$ over all decompositions $S=\sum\limits_{i=1}^{r_S} u_i$ and $T=\sum\limits_{j=1}^{r_T} v_j$, we see that
$\|S \boxtimes T\|_{\star} \leq \|S\|_{\star}\|T\|_{\star}$.
\end{proof}

In fact, we may further generalize the above result by considering tuples of tensors. 

\begin{definition}\label{def:dualnorm}
For $\textbf{S} \in \textbf{U}^r$ an $r$-tuple of tensors with $\textbf{S}=(S_1, \ldots, S_r)$ and $1 \leq \beta < \infty$ we define $[\textbf{S}]_{\beta}^{\star}$ as the minimum of
\begin{equation}\label{eq:betanorm}
\sum_{j=1}^m \left( \sum_{i=1}^r | \lambda_{i,j}|^{\beta} \right)^{1/\beta}
\end{equation}
over all $m$ and all $\{\lambda_{i,j}\}$ for which there exist unit simple tensors $v_1,v_2,\dots,v_m$
and decompositions $S_i=\sum_{j=1}^m \lambda_{i,j}v_j$, $i=1,2,\dots,r$.
For $\beta=\infty$, we define $[\textbf{S}]_{\infty}^{\star}$ by
replacing (\ref{eq:betanorm}) by
\[\sum_{j=1}^m\max_{1\leq i\leq r} | \lambda_{i,j}|.\]
\end{definition}

\begin{remark} The  minimum is the previous definition is well-defined. To see this, consider the compact set
\[\left\lbrace(k_1 v, k_2 v, \ldots k_r v) \middle| \sum_{i=1}^r |k_i|^{\beta}=1, v \text{ is a unit simple tensor}  \right\rbrace.\]
and let $B$ be its convex hull.
 For an $r$-tuple $\textbf{S}$ of tensors, it is easy to see that $[\textbf{S}]_{\beta}^{\star}$ is the infimum of all $t$ such that $\textbf{S} \in tB$. But if $\textbf{S} \in tB$, then by Carath\'eodory's Convexity Theorem (see~\cite[Theorem 2.3]{B}), we can find decompositions $S_i=\sum\limits_{j=1}^m \lambda_{i,j} v_{j}$ with $\sum\limits_{j=1}^m \left( \sum\limits_{i=1}^r | \lambda_{i,j}|^{\beta} \right)^{1/\beta} \leq t$, where $m\leq \dim \textbf{U}^r+1$. So in Definition~\ref{def:dualnorm} we may take $m=\dim_{\mathbb R}\textbf{U}^r+1$. The set of all $\lambda_{i,j}$ and $v_j$ for which $S_i=\sum\limits_{j=1}^m \lambda_{i,j} v_{j}$
 ($1\leq i\leq r$) is closed, so the function~(\ref{eq:betanorm}) has a minimum on this set.
\end{remark}

We make the following key observation:

\begin{proposition} If $1 \leq \alpha, \beta \leq \infty$ are H\"older conjugates (i.e $\frac{1}{\alpha} + \frac{1}{\beta}=1$ or $\lbrace\alpha, \beta\rbrace=\lbrace1,\infty\rbrace$), then $[\cdot]_\alpha$ and $[\cdot]_{\beta}^{\star}$ are dual.
\end{proposition}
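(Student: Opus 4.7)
I would view the tuple space $\textbf{U}^r$ as a (real or complex) inner-product space with $\langle \textbf{S}, \textbf{S}' \rangle := \sum_{i=1}^r \langle S_i, S_i' \rangle$, and verify the two halves of duality separately: (i) the universal bound $|\langle \textbf{S}, \textbf{S}' \rangle| \leq [\textbf{S}]_\alpha [\textbf{S}']_\beta^\star$, and (ii) the existence, for each nonzero $\textbf{S}$, of a nonzero $\textbf{S}'$ at which equality holds. Both steps reduce to classical H\"older on $\mathbb{R}^r$ (or $\mathbb{C}^r$) applied to the coefficient vectors that arise when $\textbf{S}'$ is expanded in unit simple tensors.

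\textbf{Upper bound.} Given any decomposition $S_i' = \sum_{j=1}^m \lambda_{i,j} v_j$ with each $v_j$ a unit simple tensor, the triangle inequality and bilinearity give
\[|\langle \textbf{S}, \textbf{S}' \rangle| \leq \sum_j \sum_i |\lambda_{i,j}| \, |\langle S_i, v_j \rangle|.\]
For fixed $j$, H\"older in the $i$-variable yields
\[\sum_i |\lambda_{i,j}| \, |\langle S_i, v_j \rangle| \leq \Bigl(\sum_i |\lambda_{i,j}|^\beta\Bigr)^{1/\beta} \Bigl(\sum_i |\langle S_i, v_j \rangle|^\alpha\Bigr)^{1/\alpha} \leq \Bigl(\sum_i |\lambda_{i,j}|^\beta\Bigr)^{1/\beta} [\textbf{S}]_\alpha,\]
the last step because $v_j$ is a unit simple tensor. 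Summing over $j$ and minimizing over decompositions of $\textbf{S}'$ proves (i). The endpoint pairs $(1,\infty)$ and $(\infty,1)$ use the analogous $\ell^1$/$\ell^\infty$ form of H\"older.

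\textbf{Extremizer.} To realize equality, pick a unit simple tensor $u$ attaining the maximum defining $[\textbf{S}]_\alpha$, and write $a_i := \langle S_i, u \rangle$, so $[\textbf{S}]_\alpha = \bigl(\sum_i |a_i|^\alpha\bigr)^{1/\alpha}$. For $1 < \alpha < \infty$, set $\lambda_i := a_i |a_i|^{\alpha-2}$ when $a_i \neq 0$ (and $\lambda_i := 0$ otherwise), and define $\textbf{S}'$ by $S_i' := \lambda_i u$. This is the standard H\"older extremizer: $\overline{\lambda_i} a_i = |a_i|^\alpha$ and $|\lambda_i|^\beta = |a_i|^\alpha$, using $(\alpha-1)\beta = \alpha$. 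The trivial one-term decomposition $S_i' = \lambda_i v_1$ with $v_1 := u$ gives $[\textbf{S}']_\beta^\star \leq \bigl(\sum_i |a_i|^\alpha\bigr)^{1/\beta}$, while $\langle \textbf{S}, \textbf{S}' \rangle = \sum_i |a_i|^\alpha = [\textbf{S}]_\alpha^{\alpha}$. Since $1 + \alpha/\beta = \alpha$, the product $[\textbf{S}]_\alpha \cdot [\textbf{S}']_\beta^\star$ is at most $[\textbf{S}]_\alpha^{\alpha}$, matching $|\langle \textbf{S}, \textbf{S}' \rangle|$; combined with (i) this forces equality (and incidentally pins down $[\textbf{S}']_\beta^\star$ exactly). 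Clearly $\textbf{S}' \neq 0$ when $\textbf{S} \neq 0$. The endpoints are handled by obvious degenerations: for $(\alpha,\beta)=(1,\infty)$ take $\lambda_i := \overline{\mathrm{sgn}(a_i)}$, and for $(\infty,1)$ concentrate on a single index $i_0$ maximizing $|a_i|$.

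\textbf{Main obstacle.} The core estimates are routine H\"older, so the plan is mostly bookkeeping. The subtlest point is to confirm that the proposed extremizer $\textbf{S}'$ actually realizes $[\textbf{S}']_\beta^\star$, since the one-term decomposition only furnishes an upper bound on $[\textbf{S}']_\beta^\star$; this is resolved automatically by pairing it with the inequality (i) running in the opposite direction. Secondary annoyances are the complex-conjugate conventions needed to make $\overline{\lambda_i} a_i$ a nonnegative real equal to $|a_i|^\alpha$, and the mildly degenerate extremizers at $\alpha \in \{1, \infty\}$.
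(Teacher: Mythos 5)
Your proposal is correct and follows essentially the same route as the paper: the upper bound via the triangle inequality and H\"older applied indexwise in $i$ for a fixed unit simple tensor $v_j$, and the equality case via the standard H\"older extremizer $\lambda_i = a_i|a_i|^{\alpha-2}$ supported on the single maximizing tensor $u$ (the paper writes the conjugate $|a_i|^{\alpha/\beta-1}\overline{a_i}$, which is the same choice under the opposite inner-product convention), with the same pinching argument to conclude equality and the same degenerate extremizers at the endpoints.
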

\begin{proof}
Let $\textbf{S}=(S_1, \ldots S_r)$ and $\textbf{T}=(T_1, \ldots T_r)$ be $r$-tuples of tensors, and write $T_i=\sum\limits_j \lambda_{i,j} v_{j}$ with $v_{j}$ simple unit tensors. Then, for $1 < \alpha, \beta < \infty$, using the H\"older inequality, 
\begin{align*} \left|\left\langle \textbf{S}, \textbf{T} \right\rangle\right| & = \left|\sum_i \left\langle S_i, \sum_j \lambda_{i,j}v_j \right\rangle\right| \\
& \leq \sum_i \sum_j |\lambda_{i,j}| |\langle S_i, v_j \rangle| \\
& \leq \sum_j \left(\sum_i | \langle S_i, v_j \rangle|^{\alpha} \right)^{1/\alpha} \left(\sum_i |\lambda_{i,j}|^{\beta}\right)^{1/\beta}\\
& \leq [\textbf{S}]_{\alpha} \sum_j \left( \sum_i | \lambda_{i,j}|^{\beta} \right)^{1/\beta}.\\
\end{align*}
Taking the minimum over all decompositions $T_i=\sum\limits_j \lambda_{i,j} v_{j}$ with $v_{j}$ simple unit tensors gives $\left|\left\langle \textbf{S}, \textbf{T} \right\rangle\right| \leq [\textbf{S}]_{\alpha}[\textbf{T}]_{\beta}^{\star}$. The same inequality holds for $\lbrace\alpha, \beta\rbrace=\lbrace1,\infty\rbrace$ using the same reasoning.

Again suppose that $1 < \alpha, \beta < \infty$. Let $u$ be a simple unit tensor such that
\[[\textbf{S}]_{\alpha}=\left(\sum_{i=1}^r |\langle S_i, u \rangle |^{\alpha} \right)^{1/\alpha}\]
and take $\textbf{T}=(T_1,\ldots,T_r)$ with $T_i=|\langle S_i, u \rangle |^{\alpha/\beta-1}\overline{\langle S_i, u \rangle} u$, where $\overline{\langle S_i, u \rangle}$ denotes the complex conjugate of $\langle S_i, u \rangle$ (or just denotes $\langle S_i, u \rangle$ if our ground field is $\mathbb{R}$). Then by definition, 
\[[\textbf{T}]_{\beta}^{\star} \leq \left( \sum_i \left(|\langle S_i, u \rangle |^{\alpha/\beta}\right)^{\beta} \right)^{1/\beta} = \left( \sum_i |\langle S_i, u \rangle |^{\alpha} \right)^{1/\beta}.\]

We also have
\[|\langle S_i, T_i \rangle |=|\langle S_i, u \rangle |^{\alpha/\beta+1}=|\langle S_i, u \rangle |^{\alpha/\beta+\alpha/\alpha}=|\langle S_i, u \rangle |^{\alpha}\]
and hence, since $\frac{1}{\alpha} + \frac{1}{\beta}=1$,
\begin{align*}\left|\left\langle \textbf{S}, \textbf{T} \right\rangle\right|&=\sum_i |\langle S_i, u \rangle |^{\alpha}\\
&=\left(\sum_i |\langle S_i, u \rangle |^\alpha\right)^{1/\alpha}\left(\sum_i|\langle S_i, u \rangle |^{\alpha}\right)^{1/\beta}\\
&\geq [\textbf{S}]_{\alpha}[\textbf{T}]_{\beta}^{\star}. 
\end{align*}

Thus, given any $r$-tuple of tensors $\textbf{S}$ we can construct a nonzero $\textbf{T}$ such that $|\langle \textbf{S}, \textbf{T} \rangle|=[\textbf{S}]_{\alpha}[\textbf{T}]_{\beta}^{\star}$, and so the norms are dual, as claimed.  

The proof for $\alpha=1$ and $\beta=\infty$ is similar - in particular, if $u$ is a unit simple tensor such that $[\textbf{S}]_{1}=\sum_{i=1}^r |\langle S_i, u \rangle |$ and we take $T_i=u$ for all $i$, then $|\langle \textbf{S}, \textbf{T} \rangle|=[\textbf{S}]_{1}=[\textbf{S}]_{1}[\textbf{T}]_{\infty}^{\star}$.

Similarly, for $\alpha=\infty$ and $\beta=1$, if $u$ is a unit simple tensor such that $[\textbf{S}]_{\infty}=\max\limits_{i} |\langle S_i, u \rangle |=|\langle S_k, u \rangle |$ for some $k$, then let $T_k=|\langle S_k, u \rangle |u$ and $T_i=0$ otherwise. Then $|\langle \textbf{S}, \textbf{T}\rangle|=|\langle S_k, u \rangle |^2=[\textbf{S}]_{\infty}[\textbf{T}]_{1}^{\star}$.
\end{proof}

\begin{proposition} If $\textbf{U}=(U,(U^{(1)}, \ldots , U^{(d)}))$ and $\textbf{V}=(V,(V^{(1)}, \ldots , V^{(d)}))$ are tensor product spaces (over $\mathbb{R}$ or $\mathbb{C}$) with $S \in \textbf{U}^r$, $T \in \textbf{V}^s$, and $1 \leq \beta \leq \infty$, then $[S \boxtimes T]_{\beta}^{\star}=[S]_{\beta}^{\star}[T]_{\beta}^{\star}$.
\end{proposition}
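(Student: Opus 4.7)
The plan is to split the asserted equality into the two inequalities and handle them in the same style as the nuclear-norm case just proved, using duality for the lower bound and an explicit decomposition for the upper bound. Throughout, let $\alpha$ denote the H\"older conjugate of $\beta$, so that by the preceding proposition $[\cdot]_\alpha$ and $[\cdot]_\beta^\star$ are dual on the tuple spaces over each of $\textbf{U}$, $\textbf{V}$, and $\textbf{U} \boxtimes \textbf{V}$.

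For the lower bound $[S \boxtimes T]_\beta^\star \geq [S]_\beta^\star [T]_\beta^\star$, I would reprise the proof of Lemma \ref{Holder} in the tuple setting. Choose nonzero $S' \in \textbf{U}^r$ and $T' \in \textbf{V}^s$ realizing $|\langle S,S' \rangle| = [S]_\beta^\star [S']_\alpha$ and $|\langle T,T' \rangle| = [T]_\beta^\star [T']_\alpha$. The fact that makes the argument go through is the factorization of the tuple-level inner product
\[\langle S \boxtimes T, S' \boxtimes T' \rangle = \sum_{i,j} \langle S_i, S'_i \rangle \langle T_j, T'_j \rangle = \langle S,S' \rangle \langle T,T' \rangle,\]
which follows from the standard identity $\langle A \otimes B, A' \otimes B' \rangle = \langle A,A' \rangle \langle B,B' \rangle$. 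Combining the duality bound $|\langle S \boxtimes T, S' \boxtimes T' \rangle| \leq [S \boxtimes T]_\beta^\star [S' \boxtimes T']_\alpha$ with Proposition \ref{Derksenlem2} applied to $S' \boxtimes T'$ produces
\[[S \boxtimes T]_\beta^\star [S']_\alpha [T']_\alpha \geq |\langle S \boxtimes T, S' \boxtimes T' \rangle| = [S]_\beta^\star [T]_\beta^\star [S']_\alpha [T']_\alpha,\]
and the desired inequality then follows after dividing by the (nonzero) product $[S']_\alpha [T']_\alpha$.

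For the upper bound I would take decompositions $S_i = \sum_j \lambda_{i,j} u_j$ and $T_k = \sum_l \mu_{k,l} v_l$ with $u_j, v_l$ simple unit tensors, and form
\[(S \boxtimes T)_{(i,k)} = \sum_{j,l} (\lambda_{i,j}\mu_{k,l})(u_j \boxtimes v_l).\]
Since each $u_j \boxtimes v_l$ is itself a simple tensor in $\textbf{U}\boxtimes\textbf{V}$ of norm $1$ (by Proposition \ref{Easylem}), this is an admissible decomposition in the sense of Definition \ref{def:dualnorm}. Plugging it in and using $|\lambda_{i,j}\mu_{k,l}|^\beta = |\lambda_{i,j}|^\beta |\mu_{k,l}|^\beta$ lets the resulting double sum over $(j,l)$ factor into the product of the two individual $[\cdot]_\beta^\star$ sums; taking minima over decompositions of $S$ and $T$ then yields $[S \boxtimes T]_\beta^\star \leq [S]_\beta^\star [T]_\beta^\star$, with the $\beta = \infty$ case handled identically after replacing $(\sum |\cdot|^\beta)^{1/\beta}$ by $\max |\cdot|$. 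I do not anticipate a serious obstacle here; the only piece of bookkeeping worth highlighting is the tuple-inner-product factorization used in the lower bound step, since that is the one place where the argument genuinely differs from the single-tensor nuclear-norm case.
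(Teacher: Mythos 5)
Your proposal is correct and follows essentially the same route as the paper: the lower bound via duality of $[\cdot]_\alpha$ and $[\cdot]_\beta^{\star}$ combined with the multiplicativity of $[\cdot]_\alpha$ (the paper simply cites Lemma \ref{Holder}, while you helpfully spell out the tuple-level inner-product factorization that makes that lemma apply to tuples), and the upper bound via the product decomposition $S_i \boxtimes T_k = \sum_{j,l}\lambda_{i,j}\mu_{k,l}\, u_j \boxtimes v_l$. No gaps.
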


\begin{proof} Again combining Lemma \ref{Holder} and Proposition \ref{Derksenlem2}, we see that $[S \boxtimes T]_{\beta}^{\star} \geq [S]_{\beta}^{\star}[T]_{\beta}^{\star}$.

Suppose that $1 \leq \beta < \infty$. Let $u_1, \ldots, u_n$ be unit simple tensors such that $S_i=\sum\limits_{j=1}^n \lambda_{i,j} u_j$ and 
\[[S]_\beta^\star=\sum_{j=1}^n \left( \sum_{i=1}^r | \lambda_{i,j}|^{\beta} \right)^{1/\beta}\]
and similarly, let $v_1, \ldots, v_m$ be unit simple tensors such that $T_k=\sum\limits_{l=1}^m \mu_{k,l} v_l$ and 
\[[T]_\beta^\star=\sum_{l=1}^m \left( \sum_{k=1}^s | \mu_{k,l}|^{\beta} \right)^{1/\beta}.\]

Then $S_i \boxtimes T_k=\sum\limits_{j=1}^n \sum\limits_{l=1}^m \lambda_{i,j} \mu_{k,l} u_j \boxtimes v_l$ and so

\begin{align*}
[S \boxtimes T]_\beta^\star & \leq \sum_{j=1}^n \sum_{l=1}^m \left( \sum_{i=1}^r \sum_{k=1}^s |\lambda_{i,j}|^\beta |\mu_{k,l} |^\beta \right)^{1/\beta} \\
&= \sum_{j=1}^n \sum_{l=1}^m \left(\left( \sum_{i=1}^r |\lambda_{i,j}|^\beta\right)^{1/\beta} \left( \sum_{k=1}^s |\mu_{k,l} |^\beta \right)^{1/\beta}\right) \\
&=\sum_{j=1}^n  \left( \sum_{i=1}^r |\lambda_{i,j}|^\beta \right)^{1/\beta} \sum_{l=1}^m \left( \sum_{k=1}^s |\mu_{k,l} |^\beta \right)^{1/\beta}\\
&=[S]_\beta^\star [T]_\beta^\star.\\
\end{align*}
By the same method, we can show that $[S \boxtimes T]_{\infty}^{\star} \leq [S]_{\infty}^{\star}[T]_{\infty}^{\star}$

Hence for all $1 \leq \beta \leq \infty$, we have $[S \boxtimes T]_{\beta}^{\star} = [S]_{\beta}^{\star}[T]_{\beta}^{\star}$.
\end{proof}

\begin{remark} A classical question about the rank of tensors was posed by Strassen (\cite{S2}). He posited that the rank of tensors is additive with respect to direct sums. Although his conjecture was later shown to be false (\cite{S1}), it was recently shown (\cite{KLW}) by Kong, Li, and Wang that the nuclear norm is additive with respect to direct sums, i.e., if $\textbf{U}=(U,(U^{(1)}, \ldots , U^{(d)}))$ and $\textbf{V}=(V,(V^{(1)}, \ldots , V^{(d)}))$ are tensor product spaces with $d>1$, and $S \in \textbf{U}$ and $T \in \textbf{V}$, then $\|S \oplus T\|_{\star}=\|S\|_{\star}+\|T\|_{\star}$. 

Similarly, the authors showed that if $\textbf{U}=(U,(U^{(1)}, \ldots , U^{(d)}))$ and $\textbf{V}=(V,(V^{(1)}, \ldots , V^{(d)}))$ are tensor product spaces with $d>1$, and $A \in \textbf{U}$ and $B \in \textbf{V}$, then $\|A \oplus B\|_{\sigma}=\max\lbrace \|A\|_\sigma,\|B\|_\sigma\rbrace$. 

Although the authors state their results only over $\mathbb{R}$, they also hold over $\mathbb{C}$.
\end{remark}

\section*{Acknowledgements} The author is grateful to his advisor, Harm Derksen, for all of his help and comments. The author was partially supported by NSF grant 1837985.


\begin{thebibliography}{8}

\bibitem{C}
M. Christandl, A. Jensen, J. Zuiddam, \emph{Tensor rank is not multiplicative under the tensor product}, Linear Algebra Appl. \textbf{543} (2018), 125--139.

\bibitem{C2}
M. Christandl, F. Gesmundo, A. Jensen, \emph{Border rank is not multiplicative under the tensor product}, SIAM J. Appl. Algebra Geometry, \textbf{3(2)} (2019), 231--255.

\bibitem{D}
H. Derksen, \emph{On the Nuclear Norm and the Singular Value Decomposition of Tensors}, Foundations of Computational Mathematics, \textbf{16.3} (2016), 779--811.

\bibitem{FL}
S. Friedland and L.-H. Lim, \emph{Nuclear Norm of Higher-Order Tensors}, Math. Comp. \textbf{87} (2018), 1255--1281.

\bibitem{LC}
L.-H. Lim and P. Comon, \emph{Blind multilinear identification}, IEEE Trans. Inform. Theory, \textbf{60.2} (2014), 1260--1280.

\bibitem{B}
A.~Barvinok, \emph{A Course in Convexity}, Graduate Studies in Mathematics~\textbf{54}, American Mathematical Society, 2002.

\bibitem{S2}
V. Strassen, \emph{Vermeidung von Divisionen}, J. Reine Angew. Math. \textbf{264} (1973), 184-202.

\bibitem{S1}
Y. Shitov, \emph{A Counterexample to Strassen's Direct Sum Conjecture}, \textsf{	arXiv:1712.08660} (2017).

\bibitem{KLW}
X. Kong, J. Li, and X. Wang, \emph{New estimations on the upper bounds for the nuclear norm of a tensor}, Journal of Inequalities and Applications, \textbf{282} (2018). 

\end{thebibliography}
\end{document}